\documentclass[conference,10pt,twocolumn]{IEEEtran}
%\documentclass[10pt,twocolumn,twoside]{IEEEtran}
%\overrideIEEEmargins
%\usepackage[english]{babel}

% latexdiff -t CCHANGEBAR -c ld.cfg old new > diff

\usepackage[margin=54pt]{geometry}

\IEEEoverridecommandlockouts

% Math packages
\usepackage{mathrsfs}
\usepackage{amsfonts}
\usepackage{amssymb}
\usepackage{amsmath}
\usepackage{graphicx}
\usepackage{caption}
\usepackage{subcaption}
\usepackage{amsthm}
\usepackage{commath}
\usepackage{tikz}
\usepackage{tkz-graph}
\usepackage[europeanresistors]{circuitikz}
\usetikzlibrary{arrows}
\usepackage{flushend}
\usepackage{psfrag}
\usepackage{pgfplots}
\usetikzlibrary{external}
\tikzexternalize[prefix=figures/]
\usepackage{booktabs}
\allowdisplaybreaks

%\pgfplotsset{every x tick label/.append style={font=\small, yshift=0.2ex}} %xticks style and distance
%\pgfplotsset{every y tick label/.append style={font=\small, xshift=0.3ex}} %yticks style and distance
%
%\pgfplotscreateplotcyclelist{Voltage}{%
%	{blue,line width=0.5pt},
%	{blue,line width=0.5pt},
%	{blue,line width=0.5pt},
%	{blue,line width=0.5pt},
%	{red,dashed,line width=0.5pt},
%	{red,dashed,line width=0.5pt},
%	{red,dashed,line width=0.5pt},
%	{red,dashed,line width=0.5pt},
%	{dotted,line width=0.5pt},		
%}
%
%\pgfplotscreateplotcyclelist{InjectedCurrent}{%
%	{blue,line width=0.5pt},
%	{blue,line width=0.5pt},
%	{blue,line width=0.5pt},
%	{blue,line width=0.5pt},
%	{red,dashed,line width=0.5pt},
%	{red,dashed,line width=0.5pt},
%	{red,dashed,line width=0.5pt},
%	{red,dashed,line width=0.5pt},	
%}
%
%\pgfplotscreateplotcyclelist{LineCurrent}{%
%	{blue,line width=0.5pt},
%	{blue,line width=0.5pt},
%	{blue,line width=0.5pt},
%	{blue,line width=0.5pt},
%	{blue,line width=0.5pt},
%	{red,dashed,line width=0.5pt},
%	{red,dashed,line width=0.5pt},
%	{red,dashed,line width=0.5pt},
%	{red,dashed,line width=0.5pt},	
%	{red,dashed,line width=0.5pt},
%}

\newtheorem{theorem}{Theorem}

\newtheorem{lemma}[theorem]{Lemma}
\newtheorem{remark}{Remark}

\newtheorem{assumption}{Assumption}
%[section]
%[section]
\newtheorem{objective}{Objective}

\DeclareMathOperator*{\argmin}{argmin}

\DeclareMathOperator*{\diag}{diag}

\DeclareMathOperator*{\sgn}{sgn}

\newcommand{\beq}{\begin{equation}}
\newcommand{\eeq}{\end{equation}}
\newcommand{\bq}{\begin{eqnarray}}
\newcommand{\eq}{\end{eqnarray}}
\newcommand{\bqn}{\begin{eqnarray*}}
\newcommand{\eqn}{\end{eqnarray*}}
\newcommand{\bee}{\begin{enumerate}}
\newcommand{\eee}{\end{enumerate}}

\newlength\fheight
\newlength\fwidth

\begin{document}
\title{\vspace*{18pt}  Dynamical Decentralized Voltage Control of Multi-Terminal HVDC Grids}

\author{ \IEEEauthorblockA{Martin Andreasson, Na Li}
 
\thanks{This work was supported in part by the European Commission by  the Swedish Research Council, the Knut and Alice Wallenberg Foundation and NSF CAREER 1553407.}
\thanks{Martin Andreasson is with the ACCESS Linnaeus Centre at KTH Royal Institute of Technology. E-mail: mandreas@kth.se}
	\thanks{ Na Li is with the School of Engineering and Applied Sciences at Harvard University. E-mail: nali@seas.harvard.edu} 
}
\maketitle

\begingroup
\makeatletter
\renewcommand{\p@subfigure}{}% Void parent macro for figures
\makeatother

\begin{abstract}
High-voltage direct current (HVDC) is a commonly used technology for long-distance electric power transmission, mainly due to its low resistive losses. When connecting multiple HVDC lines into a multi-terminal HVDC (MTDC) system, several challenges arise. To ensure safe and efficient operation of MTDC systems, the voltage of all terminals need to be steered to within an operational range. In this paper we study the commonly used decentralized voltage droop controller, and show that it in general does not steer the voltages to within the operational range. We propose a  decentralized PI controller with deadband, and show that it always steers the voltages to within the operational range regardless of the loads. Additionally we show that the proposed controller inherits the property of proportional power sharing from the droop controller, provided that both the loads and the line resistances are sufficiently low. The results are validated through simulation in \textsc{Matlab}.
\end{abstract}

%%%%%%%%%%%%%%%%%%%%%%%%%%%%%%%%%%%%%%%%
%%%%%%%%%%%%%%%%%%%%%%%%%%%%%%%%%%%%%%%%
\section{Introduction}
Transmitting power over long distances while maintaining low losses is one of the greatest challenges related to power transmission systems. Increased distances between power generation and consumption is a driving factor behind long-distance power transmission. One such example are the large-scale off-shore wind farms, which often require transmitting power over long distances to the mainland power grid. High-voltage direct current (HVDC) power transmission is a commonly used technology for long-distance power transmission. Its higher investment costs compared to AC transmission lines are compensated by its lower resistive losses for sufficiently long distances. The break-even point, i.e., the point where the total costs of overhead HVDC and AC lines are equal, is typically 500-800 km \cite{padiyar1990hvdc}. However, for cables which are typically used in undersea connections, the break-even point is typically lower than 100 km \cite{bresesti2007hvdc}. 
As more energy sources and consumers are connected by HVDC lines, the individual lines will eventually form a grid consisting of multiple terminals connected by several HVDC transmission lines. Such systems are referred to as Multi-terminal HVDC (MTDC) systems in the literature \cite{van2010multi}. 

One of the main control challenges for MTDC transmission systems is to maintain adequate DC voltages at the terminals. Firstly, the voltage levels at the DC terminals govern the current flows by Ohm's law and Kirchhoff's circuit laws. Secondly, if the DC voltage deviates too far from the nominal operational voltage, equipment could be damaged, resulting in loss of power transmission capability \cite{kundur1994power, van2010multi}. An additional, desirable property of MTDC systems is, analogous to AC power systems, the ability to fairly share injected DC currents after a disturbance has occurred \cite{kundur1994power, Haileselassie2012}. This property is often referred to as \emph{power sharing} or \emph{current sharing}. While power sharing is not strictly necessary from a safety perspective, it can reduce the cost of voltage regulation by distributing the injected current optimally. 
 
Different voltage control schemes for HVDC systems have been proposed in the literature. Among them, the voltage droop controller is the most well-known scheme \cite{kundur1994power, haileselassie2009control, Haileselassie2012}.
The voltage droop controller is a proportional controller, regulating current injections based on the local voltage \cite{14, haileselassie2009control}. An inherent disadvantage of proportional control is the presence of static control errors, and the voltage droop controller is no exception. To eliminate static control errors induced by proportional voltage droop controllers, several 
distributed secondary controllers have been proposed for MTDC systems  \cite{dai2010impact, andreasson2014MTDCjournal}. Existing secondary control schemes for MTDC systems are however, with few exceptions, distributed (with local communication) or centralized, in the sense that local controllers need access to remote state information. This can cause problems for MTDC systems, whose dynamics are very fast, when communication delays become an issue. 

In this paper, we focus on developing and analyzing the performance of completely decentralized voltage controllers with respect the two aforementioned objectives. One is to steer the DC voltages into a predefined range and the other is to fairly share the injected DC currents. We first study the standard voltage droop controller and the PI controller (without deadband). We show that the voltage droop controller is not able to achieve the first objective but is able to achieve the second one under certain conditions, whereas the PI controller is able to achieve the first objective but is not able to achieve the second one. Lastly, we propose a completely decentralized dynamical controller, which is based on a PI controller with deadband. We show that the proposed controller achieves the first objective and that under some conditions it achieves the second objective. Since the proposed controller is completely decentralized, it is much more robust to delays than the distributed controllers in the literature. 

The remainder of this paper is organized as follows. In Section~\ref{sec:model} we introduce our model and formally define our control problem. In Section~\ref{sec:droop} we review the classic droop controller. In Section~\ref{sec:dec_PI} we study a decentralized PI controller, before introducing a modified PI controller with deadband in Section~\ref{sec:deadband_PI}. In Section~\ref{sec:simulations} the controller is validated through simulation. The paper ends by some concluding remarks in Section~\ref{sec:discussion}.

%%%%%%%%%%%%%%%%%%%%%%%%%%%%%%%%%%%%%%%%
%%%%%%%%%%%%%%%%%%%%%%%%%%%%%%%%%%%%%%%%
\section{Model and problem setup}
\label{sec:model}

%%%%%%%%%%%%%%%%%%%%%%%%%%%%%%%%%%%%%%%%
\subsection{Notation}
\label{subsec:prel}
Let $\mathcal{G}$ be a graph. Denote by $\mathcal{V}=\{ 1,\hdots, n \}$ the vertex set of $\mathcal{G}$, and by $\mathcal{E}=\{ 1,\hdots, m \}$ the edge set of $\mathcal{G}$. Let $\mathcal{N}_i$ be the set of neighboring vertices to $i \in \mathcal{V}$.
In this paper we will only consider fixed, undirected and connected graphs. For the control of MTDC power transmission systems, this is a reasonable assumption as long as there are no power line failures.
Denote by $\mathcal{B}$ the vertex-edge adjacency matrix of a graph, and let $\mathcal{\mathcal{L}_W}=\mathcal{B}W\mathcal{B}^T$ be its weighted Laplacian matrix, with edge-weights given by the  elements of the positive definite diagonal matrix $W$.
%Let $\mathbb{C}^-$ denote the open left half complex plane, and $\bar{\mathbb{C}}^-$ its closure. 
Let $c_{n\times m}$ be a matrix of dimension $n\times m$ whose elements are all equal to $c$, and by $c_n$ a column vector whose elements are all equal to $c$. %For a symmetric matrix $A$, $A>0 \;(A\ge 0)$ is used to denote that $A$ is positive (semi) definite. $I_{n}$ denotes the identity matrix of dimension $n$. For vectors $x$ and $y$, we denote by $x\le y$ that the inequality holds for all elements. 
We will often drop the notion of time dependence of variables, i.e., $x(t)$ will be denoted $x$ for simplicity.

%%%%%%%%%%%%%%%%%%%%%%%%%%%%%%%%%%%%%%%%
\subsection{Model}
Consider an MTDC transmission system consisting of $n$ HVDC terminals. The terminals are denoted by the vertex set $ \mathcal{V}= \{1, \dots, n\}$. The DC terminals are connected by $m$ HVDC transmission lines, denoted by the edge set $ \mathcal{E}= \{1, \dots, m\}$. 
%The dynamics of any system (e.g., an AC transmission system) connected through the DC buses, or any dynamics of the DC buses (e.g., AC-DC converters) are neglected. 
The HVDC lines are assumed to be purely resistive, neglecting capacitive and inductive elements of the HVDC lines. The assumption of purely resistive lines is not restrictive for the control applications considered in this paper \cite{kundur1994power}. In fact, the line capacitance can be included in the capacitances of the terminals. This implies that 
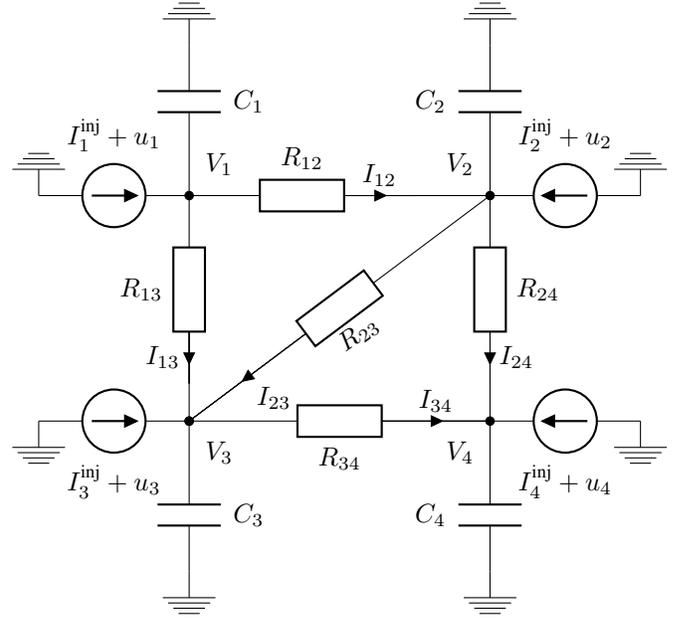
\begin{figure}
\center
\tikzsetnextfilename{HVDC_grid_figure_dynamic}
\begin{tikzpicture}[american voltages]
\draw 
% Line 1-2
(0,2) node[ground, rotate=180] {}
to 	[C, l=$C_1$] (0,0.5)
to 	[short, -*]  (0,0)
to  [R, l^=$R_{12}$, i^=$I_{12}$] (3,0)
to  [short, -*] (4,0)
to	(4,0.5) [C, l=$C_2$]
to	(4,2) node[ground, rotate=180] {}
(0.4,0.42) node[]  {$V_1$}
(3.6,0.42) node[]  {$V_2$}
% Line 1-3
[short, l={}]  (0,0)
to  [R, l_=$R_{13}$, i_=$I_{13}$] (0,-2.5)
to  [short, -*] (0,-3)
% Line 2-4
[short, l={}]  (4,0)
to  [R, l^=$R_{24}$, i^=$I_{24}$] (4,-2.5)
to  [short, -*] (4,-3)
% Line 2-3
[short, l={}]  (4,0)
to  [R, l=$R_{23}$, i=$I_{23}$] (0,-3)
to  [short, -*] (0,-3)
% Line 3-4
(0,-5) node[ground] {}
to 	[C, l_=$C_3$] (0,-3.5)
to 	[short, l={}]  (0,-3)
to  [R, l_=$R_{34}$, i^=$I_{34}$] (4,-3)
to  [short, l={}] (4,-3)
to	(4,-3.5) [C, l_=$C_4$]
to	(4,-5) node[ground] {}
(0.4,-3.42) node[]  {$V_3$}
(3.6,-3.42) node[]  {$V_4$}
% Converter 1
(-2,0) node[ground, rotate=180] {}
to [american current source, l^=$I_1^\text{inj}+u_1$] (0,0)
to [short, l={}] (0,0)
% Converter 2
(6,0) node[ground, rotate=180] {}
to [american current source, l_=$I_2^\text{inj}+u_2$] (4,0)
to [short, l={}] (4,0)
% Converter 3
(-2,-3) node[ground] {}
to [american current source, l_=$I_3^\text{inj}+u_3$] (0,-3)
to [short, l={}] (0,-3)
% Converter 4
(6,-3) node[ground] {}
to [american current source, l^=$I_4^\text{inj}+u_4$] (4,-3)
to [short, l={}] (4,-3);
\end{tikzpicture}
\caption{Topology of an MTDC system consisting of 4 terminals and 5 lines.}
\label{fig:graph}
\end{figure}
\begin{align*}
I_{ij} = \frac{1}{R_{ij}} (V_i -V_j),
\end{align*}
due to Ohm's law, where $V_i$ is the voltage of terminal $i$, $R_{ij}$ is the resistance and $I_{ij}$ is the current of the HVDC line from terminal $i$ to $j$. 
The voltage dynamics of an arbitrary DC terminal $i$ are assumed to be given by
\begin{align}
C_i \dot{V}_i &= -\sum_{j\in \mathcal{N}_i} I_{ij} + I_i^{\text{inj}} + u_i \nonumber \\
&= -\sum_{j\in \mathcal{N}_i} \frac{1}{R_{ij}}(V_i -V_j) + I_i^{\text{inj}} + u_i,
\label{eq:hvdc_scalar}
\end{align}
where $C_i$ is the total capacitance of terminal $i$, including any shunt capacitances and capacitance of the incident HVDC line, $I_i^{\text{inj}}$ is the uncontrolled injected current, which is assumed to be unknown but constant over time, and $u_i$ is the controlled injected current.  In Figure~\ref{fig:graph}, a four terminal MTDC system is illustrated. 
Equation \eqref{eq:hvdc_scalar} may be written in vector-form as
\begin{align}
\begin{aligned}
\dot{V} &= -E\mathcal{L}_R V +EI^{\text{inj}} + Eu,
\end{aligned}
\label{eq:hvdc_vector}
\end{align}
where $V=[V_1, \dots, V_n]^T$, $E=\diag([C_1^{-1}, \dots, C_n^{-1}])$, $I^{\text{inj}} = [I^{\text{inj}}_1, \dots, I^{\text{inj}}_n]^T$, $u=[u_1, \dots, u_n]^T$ and $\mathcal{L}_R$ is the weighted Laplacian matrix of the graph representing the transmission lines, whose edge-weights are given by the conductances $\frac{1}{R_{ij}}$.

In practice, users and generators at each region $i$ will determine their local injected current $I_i^{\text{inj}}$, which will affect the global system voltage profile $V$. Note that $I^{\text{inj}}$ can be viewed as the disturbance to the system.  The operation of an MTDC system requires the voltages at the terminals to be within a specified range through the controller $u$, otherwise equipment could be damaged. 
%During transients however, the voltages at the terminals may temporarily exceed their limits without causing damage. 
Hence we require the terminal voltages to be asymptotically steered into an acceptable operating range, which is formalized in the following objective.  
\begin{objective}
\label{obj:voltage_range}
The HVDC terminal voltages are asymptotically steered into the acceptable operating range under any disturbance $I^{inj}$, i.e.,
\begin{align}
\label{eq:voltage_range}
\lim_{t\rightarrow \infty} V_i(t) \in [\underline{V}_i, \overline{V}_i], \; i \in \mathcal{V}.
\end{align}
\end{objective}

Another desirable property of an MTDC system is its ability to fairly distribute the cost of regulating the terminal voltages in case of an increased load. 
%This is often referred to as proportional current sharing, implying that the terminals inject current at a predefined ratio, i.e., $u_i=\rho_i I^*$ for some common value $I^*$ and predefined ratios $\rho_i$. 
Below we consider an optimization problem with quadratic cost functions on the controlled power injection. 
\begin{objective}
\label{obj:current_sharing_opt}
The cost of the controlled current injections should be minimized asymptotically. More precisely, we require
\begin{align*}
\lim_{t\rightarrow \infty} u(t) = u^*,
\end{align*}
where $u^*$ is defined by
\begin{align}
[u^*,V^*]=\argmin_{[u,V]} \sum_{i \in \mathcal{V}} \frac{1}{2} f_i u_i^2 \quad \text{s.t.} \quad \mathcal{L}_R V  &= I^{\text{inj}} + u, \label{eq:opt1}
\end{align}
and where $f_i>0, i=1, \dots , n$ are positive constants. 
\end{objective}
\begin{remark}
\label{rem:optimality_opjective}
Given a connected graph $\mathcal{G}$, the Laplacian matrix $\mathcal{L}_R$ has rank $n-1$, and $1_n$ is the only eigenvector corresponding to the zero eigenvalue. This implies that the optimization problem \eqref{eq:opt1} defines the optimal voltages $V^*$ of the terminals only up to the addition of a scalar constant. And the total current injection $I^{inj}+u^*$ should be in the image space of $\mathcal{L}_R$. 
\end{remark}

\begin{lemma}
 \label{lemma:optimality}
Objective \ref{obj:current_sharing_opt} is satisfied if and only if $\lim_{t\rightarrow \infty} u(t) =\mu F^{-1}1_{n}$ and $\lim_{t\rightarrow \infty} \mathcal{L}_R V(t)  = I^{\text{inj}} + \mu F^{-1}1_{n}$, where $F = \diag(f_1, \dots, f_n)$. The scaling factor is given by $\mu = -(\sum_{i=1}^n I_i^{\text{inj}})/(\sum_{i=1}^n f_i^{-1})$. 
 \end{lemma}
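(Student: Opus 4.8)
The plan is to recognize \eqref{eq:opt1} as a convex quadratic program and characterize its minimizer through first-order (KKT) conditions, exploiting the spectral structure of $\mathcal{L}_R$ recorded in Remark~\ref{rem:optimality_opjective}.

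First I would reduce the feasibility constraint. Since the cost $\sum_{i}\frac{1}{2}f_i u_i^2 = \frac{1}{2}u^T F u$ does not depend on $V$, the variable $V$ enters only through the equality constraint $\mathcal{L}_R V = I^{\text{inj}} + u$. Because $\mathcal{L}_R$ is symmetric with null space $\operatorname{span}(1_n)$, its image is exactly $\{x : 1_n^T x = 0\}$; hence a feasible $V$ exists if and only if $1_n^T(I^{\text{inj}} + u) = 0$, i.e.\ $1_n^T u = -\sum_{i} I_i^{\text{inj}}$. The problem therefore collapses to minimizing $\frac{1}{2}u^T F u$ over $u$ subject to this single scalar constraint, with $V$ recovered afterwards only up to an additive multiple of $1_n$, consistent with Remark~\ref{rem:optimality_opjective}.

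Next I would form the Lagrangian $L(u,\mu) = \frac{1}{2}u^T F u - \mu\big(1_n^T u + \sum_{i} I_i^{\text{inj}}\big)$ and impose stationarity $\nabla_u L = F u - \mu 1_n = 0$, which gives $u^* = \mu F^{-1}1_n$. Substituting into the constraint $1_n^T u^* = -\sum_{i} I_i^{\text{inj}}$ yields $\mu\, 1_n^T F^{-1}1_n = \mu \sum_{i} f_i^{-1} = -\sum_{i} I_i^{\text{inj}}$, hence $\mu = -(\sum_{i} I_i^{\text{inj}})/(\sum_{i} f_i^{-1})$, exactly the claimed scaling factor; the associated constraint then reads $\mathcal{L}_R V^* = I^{\text{inj}} + \mu F^{-1}1_n$.

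Finally I would argue both directions of the equivalence. Since $F \succ 0$, the reduced objective is strictly convex in $u$ over an affine feasible set, so the stationarity condition is both necessary and sufficient for optimality and the minimizing $u$ is unique; this delivers the ``only if'' direction. For the ``if'' direction I would verify that a pair satisfying $u = \mu F^{-1}1_n$ and $\mathcal{L}_R V = I^{\text{inj}} + \mu F^{-1}1_n$ is feasible --- the choice of $\mu$ makes $1_n^T(I^{\text{inj}} + \mu F^{-1}1_n) = 0$, so the right-hand side lies in the image of $\mathcal{L}_R$ --- and that it meets stationarity, hence is optimal. I expect the only real subtlety to be bookkeeping the non-uniqueness of $V$: the argument must be phrased in terms of $\mathcal{L}_R V$ rather than $V$ itself, which is precisely why the lemma states the condition on $\mathcal{L}_R V(t)$ and not on $V(t)$.
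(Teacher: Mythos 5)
Your proposal is correct and follows essentially the same route as the paper: characterize the minimizer of the convex quadratic program \eqref{eq:opt1} via KKT/stationarity to get $u=\mu F^{-1}1_n$, then contract the constraint with $1_n^T$ to pin down $\mu$, with convexity giving necessity and sufficiency. Your version is somewhat more explicit than the paper's (you first eliminate $V$ using the image of $\mathcal{L}_R$, reducing to a single scalar constraint, and you spell out both directions of the equivalence), but the underlying argument is the same.
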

\begin{remark}
\label{rem:optimality_lemma}
Lemma~\ref{lemma:optimality} implies that the optimal solution of \eqref{eq:opt1} is given by proportional current sharing between the HVDC terminals, and that the current sharing ratio of the terminals are inversely proportional to the cost coefficients $f_i$. 
\end{remark} 
 
 \begin{proof}
The KKT condition for the optimization problem \eqref{eq:opt1} is $Fu=\mu 1_{n}$, which gives $u= F^{-1}\mu 1_{n}$. Substituting this expression for $u$ and pre-multiplying the constraint $\lim_{t\rightarrow \infty} \mathcal{L}_R V(t)  = I^{\text{inj}} + F^{-1}\lambda 1_{n}$ with $1_n^T$, yields the desired expression for $\mu$. 
Since \eqref{eq:opt1} is convex, the KKT condition is a necessary and sufficient condition for optimality.
 \end{proof}
 
%%%%%%%%%%%%%%%%%%%%%%%%%%%%%%%%%%%%%%%%%%%%
%%%%%%%%%%%%%%%%%%%%%%%%%%%%%%%%%%%%%%%%%%%%
\section{Decentralized droop control}
\label{sec:droop}
In this section we study the commonly used voltage droop controller. The voltage droop controller takes the form of a decentralized proportional controller, and is given by:
\begin{align}
u_i &= - K^P_i(V_i - V^{\text{nom}}_i),
\label{eq:droop}
\end{align}
where $V^{\text{nom}}_i$ is the nominal DC voltage of terminal $i$. Throughout this paper, we assume that $V^{\text{nom}}_i = V^{\text{nom}}_j, \; i,j \in \mathcal{V}$ and let $V^{\text{nom}} = [V^{\text{nom}}_1, \dots, V^{\text{nom}}_n]^T$. Also let $K^P=\diag(K^P_1, \dots, K^P_n)$. The controller \eqref{eq:droop} can be written in vector form as
\begin{align}
u &= - K^P(V-V^{\text{nom}}).
\label{eq:droop_vector}
\end{align}
The decentralized structure of the voltage droop controller is appealing to control of HVDC terminals, as the voltage dynamics are typically very fast, and hence sensitive to controller communication delays. 

The closed-loop dynamics of the MTDC system \eqref{eq:hvdc_vector} and the droop controller \eqref{eq:droop_vector} is given by
\begin{align}
\begin{aligned}
\dot{{V}}' &= -E\mathcal{L}_R V - EK^P(V-V^\text{nom}) + EI^{\text{inj}} \\
&= {-E(\mathcal{L}_R + K^P)}{V}' + EI^{\text{inj}},
\end{aligned}
\label{eq:hvdc_closed_loop_droop_vector}
\end{align}
where $V'=V-V^\text{nom}$.\footnote{To obtain the right hand side of (\ref{eq:hvdc_closed_loop_droop_vector}), notice that $\mathcal{L}_R V^{nom}=0$.} 
It hast been shown in \cite{Andreasson2014_IFAC} that the equilibrium of \eqref{eq:hvdc_closed_loop_droop_vector} is always asymptotically stable, as long as the droop gains $K^P_i$ are positive. 
The droop controller (\ref{eq:droop}) however in general fails to guarantee that the voltages converge within a certain bound. The following lemma formalizes this claim.

\begin{lemma}
\label{lemma:droop_voltage}
An MTDC system \eqref{eq:hvdc_scalar} controlled by the droop controller \eqref{eq:droop} where $K^P$ is fixed, does in general not satisfy Objective~\ref{obj:voltage_range} for any finite $\underline{V}_i$ and $\overline{V}_i$. 
\end{lemma}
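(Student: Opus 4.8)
The plan is to exhibit, for any fixed droop gain $K^P$ and any prescribed finite bounds $\underline{V}_i, \overline{V}_i$, a constant disturbance $I^{\text{inj}}$ whose induced steady-state voltage falls outside the operating range. First I would compute the unique equilibrium of the closed-loop system \eqref{eq:hvdc_closed_loop_droop_vector}. Setting $\dot{V}' = 0$ and using that $E = \diag([C_1^{-1}, \dots, C_n^{-1}])$ is invertible, the equilibrium condition collapses to $(\mathcal{L}_R + K^P)V' = I^{\text{inj}}$. Since $\mathcal{L}_R$ is positive semidefinite and $K^P = \diag(K^P_1, \dots, K^P_n)$ is positive definite whenever all $K^P_i > 0$, their sum is positive definite and hence invertible. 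Writing $M = (\mathcal{L}_R + K^P)^{-1}$, the unique equilibrium is $V = V^{\text{nom}} + M I^{\text{inj}}$.

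Next I would invoke the stability result cited from \cite{Andreasson2014_IFAC}, which guarantees asymptotic stability of this equilibrium as long as $K^P_i > 0$. Consequently $\lim_{t \to \infty} V(t) = V^{\text{nom}} + M I^{\text{inj}}$ for every constant $I^{\text{inj}}$, so verifying Objective~\ref{obj:voltage_range} reduces to asking whether this limiting vector lies in $\prod_i [\underline{V}_i, \overline{V}_i]$.

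The key observation is that $M$ is invertible, so the linear map $I^{\text{inj}} \mapsto M I^{\text{inj}}$ is a surjection onto $\mathbb{R}^n$; in particular no coordinate of the equilibrium can be clamped to a bounded interval uniformly in the disturbance. Fixing the index $i$ at which a violation is sought, the $i$-th row of $M$ is nonzero, so there exists a direction $w \in \mathbb{R}^n$ with $(Mw)_i \neq 0$. Taking $I^{\text{inj}} = \alpha w$ makes the limiting voltage $V_i = V^{\text{nom}}_i + \alpha (Mw)_i$, which diverges as $\alpha \to +\infty$ or $\alpha \to -\infty$. Hence for $|\alpha|$ large enough the steady-state voltage leaves $[\underline{V}_i, \overline{V}_i]$, showing that Objective~\ref{obj:voltage_range} cannot hold for arbitrary $I^{\text{inj}}$.

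I expect the only genuine subtlety to be the justification that $\mathcal{L}_R + K^P$ is invertible and that the equilibrium depends on $I^{\text{inj}}$ through an invertible, and therefore surjective, linear map; the remainder is a straightforward scaling argument. A minor point worth stating explicitly is that, because the bounds are fixed \emph{before} the disturbance is chosen, it suffices to produce a single offending $I^{\text{inj}}$ rather than to characterize all disturbances that violate the range.
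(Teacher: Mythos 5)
Your proposal is correct and follows essentially the same route as the paper: compute the unique equilibrium $V' = (\mathcal{L}_R + K^P)^{-1} I^{\text{inj}}$, note that $\mathcal{L}_R + K^P$ is invertible, and scale the disturbance to push some coordinate of the steady-state voltage outside any prescribed finite interval (the paper scales along an eigenvector of $\mathcal{L}_R + K^P$, which its text misstates as an ``eigenvalue''). Your version is, if anything, slightly more explicit in invoking the cited stability result to identify the limit with the equilibrium and in justifying that the relevant row of the inverse is nonzero.
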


\begin{proof}
Consider the equilibrium of \eqref{eq:hvdc_closed_loop_droop_vector}, which gives $ V' = (\mathcal{L}_R + K^P)^{-1}  I^{\text{inj}}$, where $V'=V-V^\text{nom}$. Since $K^P$ is fixed, $(\mathcal{L}_R + K^P)^{-1}$ exists and is finite. If $I^{\text{inj}}$ is chosen as, e.g., any eigenvalue of $(\mathcal{L}_R + K^P)$, the magnitude of $V'$ can be increased arbitrarily by increasing the magnitude of $I^{\text{inj}}$, eventually violating Equation~\eqref{eq:voltage_range} of Objective~\ref{obj:voltage_range}.
\end{proof}

It can easily be shown that an MTDC system controlled with the droop controller \eqref{eq:droop} does in general also not satisfy Objective~\ref{obj:current_sharing_opt}. Under certain special conditions however, Objective~\ref{obj:current_sharing_opt} can be satisfied, as detailed in the following lemma. 
%Let the resistances of the HVDC lines, $R_{ij}, \; (i,j) \in \mathcal{E}$, be fixed. 
Now let the resistances of the HVDC lines be scaled by a factor $\gamma$, i.e., $\tilde{R}_{ij} = \gamma R_{ij}, \; (i,j) \in\mathcal{E}$. Because HVDC usually features with low power line resistance and low power losses \cite{padiyar1990hvdc}, the following lemma studies the performance of the droop controller (\ref{eq:droop}) when $\gamma$ is approximately zero.

\begin{lemma}
\label{lemma:droop_current_sharing_resistance}
In the limit when the HVDC line resistances $\tilde{R}_{ij}$ go to zero, i.e., $\gamma \rightarrow 0, (i,j) \in \mathcal{E}$, an MTDC system \eqref{eq:hvdc_scalar} controlled by the droop controller \eqref{eq:droop} satisfies Objective~\ref{obj:current_sharing_opt} for $F^{-1}=K^P$. 
\end{lemma}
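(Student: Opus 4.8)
The plan is to compute the droop-controlled equilibrium for the scaled resistances, analyze its limit as $\gamma \to 0$, and verify the two conditions of Lemma~\ref{lemma:optimality}. Scaling all resistances by $\gamma$ replaces $\mathcal{L}_R$ by $\tilde{\mathcal{L}}_R = \frac{1}{\gamma}\mathcal{L}_R$, so for each fixed $\gamma > 0$ the closed loop is asymptotically stable \cite{Andreasson2014_IFAC} and its equilibrium satisfies $(\tilde{\mathcal{L}}_R + K^P) V' = I^{\text{inj}}$ with $V' = V - V^{\text{nom}}$. Rewriting this as $V' = \gamma(\mathcal{L}_R + \gamma K^P)^{-1} I^{\text{inj}}$ exposes the singular perturbation: as $\gamma \to 0$ the matrix $\mathcal{L}_R + \gamma K^P$ degenerates to the singular $\mathcal{L}_R$, whose inverse blows up along the null space spanned by $1_n$, so one cannot pass to the limit naively.

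To control this blow-up I would split $V' = \alpha 1_n + w$ with $1_n^T w = 0$ and substitute into $(\mathcal{L}_R + \gamma K^P)V' = \gamma I^{\text{inj}}$. Projecting onto $1_n$ and using $\mathcal{L}_R 1_n = 0$ and $1_n^T \mathcal{L}_R = 0$ yields the $\gamma$-independent balance $1_n^T K^P V' = 1_n^T I^{\text{inj}}$, i.e. $\alpha \sum_i K_i^P + \sum_i K_i^P w_i = \sum_i I_i^{\text{inj}}$, which keeps the common mode $\alpha$ bounded. Projecting onto the complement with $P^\perp = I - \frac{1}{n}1_n 1_n^T$ gives $\mathcal{L}_R w = \gamma P^\perp(I^{\text{inj}} - \alpha K^P 1_n - K^P w)$, and since $\mathcal{L}_R$ is invertible on the range of $P^\perp$ this forces $w = O(\gamma)$. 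Letting $\gamma \to 0$ then gives $w \to 0$ and $\alpha \to \alpha^\ast := (\sum_i I_i^{\text{inj}})/(\sum_i K_i^P)$, hence $V' \to \alpha^\ast 1_n$.

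The two conditions now follow. Since $u = -K^P V'$ at equilibrium, I obtain $\lim_{\gamma \to 0} u = -\alpha^\ast K^P 1_n = \mu K^P 1_n$ with $\mu = -(\sum_i I_i^{\text{inj}})/(\sum_i K_i^P)$; under $F^{-1} = K^P$ this is precisely $\mu F^{-1} 1_n$ and the value of $\mu$ matches Lemma~\ref{lemma:optimality}. For the second condition I would use the orthogonal projection to write $\tilde{\mathcal{L}}_R V = \frac{1}{\gamma}\mathcal{L}_R w = P^\perp(I^{\text{inj}} - \alpha K^P 1_n - K^P w)$; the common-mode balance gives $1_n^T(I^{\text{inj}} - \alpha^\ast K^P 1_n) = 0$, so $P^\perp$ acts as the identity in the limit and $\lim_{\gamma \to 0} \tilde{\mathcal{L}}_R V = I^{\text{inj}} + \mu K^P 1_n$, exactly the scaled-problem version of the second condition in Lemma~\ref{lemma:optimality}.

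The main obstacle is the singular perturbation: the inverse $(\mathcal{L}_R + \gamma K^P)^{-1}$ diverges as $\gamma \to 0$, so a direct limit is unavailable. The decomposition along $1_n$ and its orthogonal complement is what resolves this — the droop term $K^P$ regularizes the otherwise-unbounded common mode while the orthogonal part decays at rate $\gamma$ — and the $\gamma$-independent balance is precisely what guarantees that $\alpha$ and $w$ stay bounded before the limit is taken.
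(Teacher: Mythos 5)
Your proof is correct, and it takes a genuinely different route from the paper's. The paper diagonalizes $\tilde{\mathcal{L}}_R + K^P$, expands $I^{\text{inj}}$ in its (orthonormal, $\gamma$-dependent) eigenbasis, and argues that every modal contribution except the one along the limiting null direction $1_n$ vanishes because the corresponding eigenvalues $\lambda_i = \lambda_i'/\gamma$ blow up; the surviving coefficient is then pinned down by the same common-mode balance $1_n^T K^P V' = 1_n^T I^{\text{inj}}$ that you derive. Your decomposition $V' = \alpha 1_n + w$ with $1_n^T w = 0$ accomplishes the same mode separation without tracking $\gamma$-dependent eigenpairs, and it buys two things the paper's argument handles loosely: first, the paper infers that $\lambda_1$ is ``nonzero and finite'' from the assumed existence of a nonzero limit of $V'$, and treats $v_1$ as $\frac{1}{\sqrt{n}}1_n$ already at finite $\gamma$ (true only in the limit unless $K^P$ is a multiple of the identity), whereas your projection identities are exact for every $\gamma>0$; second, you obtain the quantitative rate $w = O(\gamma)$ and you explicitly verify the second condition of Lemma~\ref{lemma:optimality}, which the paper leaves implicit (it holds automatically at equilibrium since $\dot{V}=0$ forces $\tilde{\mathcal{L}}_R V = I^{\text{inj}} + u$). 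The one step worth tightening is the passage from $\mathcal{L}_R w = \gamma P^\perp(\cdots)$ to $w = O(\gamma)$: the right-hand side contains $\alpha$ and $w$ themselves, so you should first establish that $V'$ is bounded uniformly in $\gamma$ (for instance from $\tilde{\mathcal{L}}_R + K^P \succeq K^P \succ 0$, which gives $\norm{V'} \le \norm{I^{\text{inj}}}/\min_i K^P_i$), or close the loop by combining your two projected equations into a joint bound on $|\alpha|$ and $\norm{w}$ for small $\gamma$; either fix is one line.
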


\begin{proof}
Let $\tilde{\mathcal{L}}_R$ be the weighted Laplacian matrix of the MTDC system, with edge-weights $\frac{1}{\gamma R_{ij}}$. 
Since $(\tilde{\mathcal{L}}_R + K^P)$ is real and symmetric, any vector in $\mathbb{R}^n$ can be expressed as a linear combination of its eigenvectors. Denote by $(v_i, \lambda_i)$ the eigenvector and eigenvalue pair $i$ of $(\tilde{\mathcal{L}}_R + K^P)$. Note that $v_i$ and $\lambda_i$ will depend on $\gamma$, but that we do not write the dependence explicitly to simplify notation. Since $(\tilde{\mathcal{L}}_R + K^P)$ is symmetric, the eigenvectors ${v_i, i=1, \ldots, n}$ can be chosen such that they form an orthonormal basis of $\mathbb{R}^n$. Also we order the eigenvalues $\lambda_i$ such that $\lambda_1\geq \lambda_2 \geq \ldots \lambda_n$. Write
\begin{align}
I^{\text{inj}}  &= \sum_{i=1}^n a_i v_i,
\label{eq:eigendecomposition_1}
\end{align}
where $a_i, i=1, \dots, n$ are real constants. which also depend on $\gamma$. The equilibrium of \eqref{eq:hvdc_closed_loop_droop_vector} implies that the voltages satisfy
\begin{align*}
 V'
&= (\tilde{\mathcal{L}}_R + K^P)^{-1}  I^{\text{inj}}  \\
&=  \left(\frac{1}{\gamma} \mathcal{L}_R + K^P\right)^{-1}\sum_{i=1}^n a_i v_i \\
&= \sum_{i=1}^n \frac{a_i}{\lambda_i} v_i,
\end{align*}
where $\lambda_i$ is the $i$th eigenvalue of $\left(\frac{1}{\gamma} \mathcal{L}_R + K^P\right)$. Let us consider $\left( \mathcal{L}_R + \gamma K^P\right)$ instead, which clearly has the same eigenspace, but whose eigenvalues $\lambda_i'$ are scaled by $\frac{1}{\gamma}$, i.e., $\lambda_i' = {\gamma} \lambda_i$. 
Clearly $\lim_{\gamma \rightarrow 0}\left( \mathcal{L}_R + \gamma K^P\right) = \mathcal{L}_R$, and hence $\lambda_i'$ are the eigenvalues of the Laplacian matrix $\mathcal{L}_R$ as $\gamma \rightarrow 0$. Since the eigenvalues $\lambda_i(\mathcal{L}_R), \; i = 2, \dots n$, of $\mathcal{L}_R$ are all nonzero, $\lim_{\gamma \rightarrow 0} \lambda_i =  \lim_{\gamma \rightarrow 0} \frac{1}{\gamma} \lambda_i' = \infty, \; i=2, \dots, n$. 
Thus $\lim_{\gamma \rightarrow 0} V' = \frac{a_1}{\lambda_1} v_1$ where $v_1$ is eigenvector of $\mathcal{L}_R$ corresponding to the eigenvalue $0$. Since the limit of $V'$ exists and is in general nonzero, $\lambda_1$ must be nonzero and finite. 
By premultiplying \eqref{eq:eigendecomposition_1} with $v_1^T$ and keeping in mind that $v_1 = \frac{1}{\sqrt{n}}1_n$, we obtain $a_1=\frac{1}{\sqrt{n}} \sum_{i=1}^n I_i^{\text{inj}}$ since the eigenvectors of $(\tilde{\mathcal{L}}_R + K^P)$ form an orthonormal basis of $\mathbb{R}^n$. Thus $\lim_{\gamma \rightarrow 0} \lim_{t\rightarrow \infty} V'(t) = \frac{1}{{n}\lambda_1}\left(\sum_{i=1}^n I_i^{\text{inj}}\right) 1_{n}$. Finally the controlled injected currents are given by
\begin{align*}
\lim_{\gamma \rightarrow 0} u = \lim_{\gamma \rightarrow 0} - K^P V'& = K^P\left( \frac{1}{{n}\lambda_1}\left(\sum_{i=1}^n I_i^{\text{inj}}\right) 1_{n} \right) \\
&= -\frac{a_1}{\sqrt{n}\lambda_1} K^P 1_{n}.
\end{align*}
By premultiplying the equilibrium of \eqref{eq:hvdc_closed_loop_droop_vector} with $1_{n}^T C$ we obtain
\begin{align*}
1_n^T K^P V' =  1_n^T I^{\text{inj}},
\end{align*}
which implies that
\begin{align*}
\frac{a_1}{\lambda_1} &= \frac{1_{n}^T I^{\text{inj}}}{1_{n}^T  K^P 1_{n} } = (\sum_{i=1}^n I_i^{\text{inj}})/(\sum_{i=1}^n K^P_i),
\end{align*}
which gives the following expression for $u$:
\begin{align*}
u= -\left(\sum_{i=1}^n I_i^{\text{inj}}\right)/\left(\sum_{i=1}^n K^P_i\right) K^P 1_{n}.
\end{align*}
By Lemma~\ref{lemma:optimality}, Objective~\ref{obj:current_sharing_opt} is satisfied for $F^{-1}=K^P$. 
\end{proof}

%%%%%%%%%%%%%%%%%%%%%%%%%%%%%%%%%%%%%%%%%%%%
%%%%%%%%%%%%%%%%%%%%%%%%%%%%%%%%%%%%%%%%%%%%
\section{Decentralized PI control}
\label{sec:dec_PI}
A na\^{i}ve approach to satisfy Objective~\ref{obj:voltage_range} with a decentralized control structure, is to implement decentralized PI controllers at the HVDC terminals. By adding an integrator at each terminal, any steady-state error can in theory be eliminated. If the reference voltage is chosen to be within the acceptable voltage region, Objective \ref{obj:voltage_range} should be satisfied. 
Based on the previous discussion, the controller at an arbitrary terminal $i$  takes the form
\begin{subequations}
\label{eq:dec_voltage_control_PI}
\begin{align}
u_i &= -K^P_i (V_i - V_i^\text{nom}) - K_i^I z_i \label{subeq:dec_voltage_control_PI_1} \\
\dot{z}_i &= (V_i - V_i^\text{nom}). \label{subeq:dec_voltage_control_PI_2} 
\end{align}
\end{subequations}

\begin{lemma}
\label{lemma:PI_voltage}
An MTDC system \eqref{eq:hvdc_scalar} controlled by the PI controller \eqref{eq:dec_voltage_control_PI} satisfies Objective~\ref{obj:voltage_range} if and only if $\underline{V}_i \le V_i^\text{nom} \le \overline{V}_i, \; i=1 \dots, n$.  
\end{lemma}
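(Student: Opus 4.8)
The plan is to use the integral action to fix the steady-state voltage and then show that the equilibrium is asymptotically stable, so that the transient is irrelevant to the objective.

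First, I would assemble the closed-loop dynamics. Writing \eqref{eq:hvdc_vector} together with \eqref{eq:dec_voltage_control_PI} in vector form, using $V' = V - V^\text{nom}$ and the identity $\mathcal{L}_R V^\text{nom} = 0$, gives
\begin{align*}
\dot{V}' &= -E(\mathcal{L}_R + K^P) V' - E K^I z + E I^\text{inj}, \\
\dot{z} &= V',
\end{align*}
where $K^P = \diag(K^P_1, \dots, K^P_n)$ and $K^I = \diag(K^I_1, \dots, K^I_n)$. Setting the derivatives to zero, the $\dot z$-equation forces $V' = 0$, i.e.\ $V = V^\text{nom}$, and the $\dot{V}'$-equation then yields $z = (K^I)^{-1} I^\text{inj}$. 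Thus the integrators drive every terminal voltage to its nominal value irrespective of the disturbance $I^\text{inj}$.

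Second, and this is the crux, I would establish that this equilibrium is asymptotically stable. After translating $z$ to place the equilibrium at the origin, the eigenvalues $\lambda$ of the resulting linear system are exactly the roots of the quadratic eigenvalue problem $\det(\lambda^2 M + \lambda D + G) = 0$, where $M = E^{-1} = \diag(C_1, \dots, C_n)$, $D = \mathcal{L}_R + K^P$ and $G = K^I$. Each coefficient matrix is symmetric positive definite: $M$ and $G$ because the capacitances and integral gains are positive, and $D$ because $\mathcal{L}_R$ is positive semidefinite while $K^P$ is positive definite. For any eigenpair $(\lambda, y)$ with $y \ne 0$, premultiplying $(\lambda^2 M + \lambda D + G) y = 0$ by $y^*$ gives the scalar equation $a\lambda^2 + b\lambda + c = 0$ with $a = y^* M y > 0$, $b = y^* D y > 0$ and $c = y^* G y > 0$. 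Whether the discriminant is nonnegative (two real roots, both negative by Vieta's formulas since $a,b,c>0$) or negative (complex conjugate roots with real part $-b/(2a) < 0$), every root satisfies $\operatorname{Re}(\lambda) < 0$. Hence the equilibrium is asymptotically stable and $\lim_{t\to\infty} V(t) = V^\text{nom}$ for every $I^\text{inj}$.

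Finally, I would close the equivalence. Objective~\ref{obj:voltage_range} requires the limiting voltage to lie in $[\underline{V}_i, \overline{V}_i]$ under any disturbance; since that limit equals $V_i^\text{nom}$ independently of $I^\text{inj}$, the objective holds for all disturbances precisely when $\underline{V}_i \le V_i^\text{nom} \le \overline{V}_i$ for each $i$. The ``if'' direction is then immediate, while for ``only if'' a nominal value outside the band yields a limiting voltage outside the band, violating the objective. I expect the stability argument of the second step to be the only real obstacle; the equilibrium computation and the final logical equivalence are routine once the quadratic-eigenvalue structure with symmetric positive definite coefficients is recognized.
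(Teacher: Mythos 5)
Your proof is correct, and its key step takes a genuinely different route from the paper's. The paper disposes of stability in one line by invoking Theorem~\ref{th:stability_ss_PI_deadband} with the degenerate deadband $\underline{V}_i = V_i^\text{nom} = \overline{V}_i$, i.e., it reuses the Lyapunov/LaSalle machinery built for the nonlinear deadband controller; the equilibrium computation ($\dot z = 0$ forces $V=V^\text{nom}$) is then the same as yours. You instead exploit the fact that without a deadband the closed loop is linear, reduce its spectrum to the quadratic eigenvalue problem $\det\left(\lambda^2 C + \lambda(\mathcal{L}_R + K^P) + K^I\right) = 0$ with symmetric positive (semi)definite coefficients, and conclude strictly negative real parts from the Rayleigh-quotient argument applied to $a\lambda^2+b\lambda+c=0$ with $a,b,c>0$. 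Your route is self-contained, yields global exponential convergence to $V^\text{nom}$ rather than the weaker attractivity-of-a-set conclusion one extracts from the Lyapunov function of Theorem~\ref{th:stability_ss_PI_deadband}, and does not lean on that theorem's uniform-bounds hypothesis (vacuously satisfied here only because the paper assumes a uniform $V^\text{nom}$); the paper's route buys brevity and avoids duplicating a stability proof. Your explicit handling of the ``only if'' direction --- the limit equals $V^\text{nom}$ independently of $I^\text{inj}$, so a nominal voltage outside the band violates the objective --- spells out what the paper leaves implicit.
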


\begin{proof}
The stability of the closed-loop dynamics of \eqref{eq:hvdc_closed_loop_droop_vector} and \eqref{eq:dec_voltage_control_PI} follows from Theorem~\ref{th:stability_ss_PI_deadband} in the next section by letting $\underline{V}_i = V_i^\text{nom} = \overline{V}_i, \; i=1 \dots, n$. Letting $\dot{z}_i=0, \; i=1, \dots, n$ gives $V_i = V_i^\text{nom}, \; i=1, \dots, n$, which concludes the proof.
\end{proof}
While the decentralized PI controller always satisfies Objective~\ref{obj:voltage_range}, it does in general not satisfy Objective~\ref{obj:current_sharing_opt}. The following lemma formalizes this claim. 

\begin{lemma}
\label{lemma:PI_current_sharing}
An MTDC system \eqref{eq:hvdc_scalar} controlled by the PI controller \eqref{eq:dec_voltage_control_PI} does in general not satisfy Objective~\ref{obj:current_sharing_opt}.
\end{lemma}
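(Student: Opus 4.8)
The plan is to compute the unique equilibrium of the closed-loop system under the PI controller \eqref{eq:dec_voltage_control_PI} and show that the steady-state control input is \emph{forced} to cancel the local disturbance at each terminal individually, which generically contradicts the proportional-sharing structure required by Lemma~\ref{lemma:optimality}. First I would appeal to stability (from the forthcoming Theorem~\ref{th:stability_ss_PI_deadband}, exactly as invoked in the proof of Lemma~\ref{lemma:PI_voltage}) so that $u(t)$ converges to its equilibrium value; this reduces the claim to a statement about the equilibrium. At equilibrium, setting $\dot{z}_i = 0$ in \eqref{subeq:dec_voltage_control_PI_2} yields $V_i = V_i^{\text{nom}}$ for every $i$, and by the standing assumption that all nominal voltages coincide we obtain $V = V^{\text{nom}}$, a scalar multiple of $1_n$.

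Next I would substitute this equilibrium voltage into the terminal dynamics \eqref{eq:hvdc_vector}. Setting $\dot{V} = 0$ gives $\mathcal{L}_R V = I^{\text{inj}} + u$, and since $V$ is parallel to $1_n$ and $\mathcal{L}_R 1_n = 0$, the left-hand side vanishes, leaving $u = -I^{\text{inj}}$. Thus the steady-state control is completely pinned down: each terminal must inject exactly the negative of its own uncontrolled current, with no remaining freedom for redistribution among the terminals.

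Finally I would contrast this with the optimum. By Lemma~\ref{lemma:optimality}, Objective~\ref{obj:current_sharing_opt} demands $u = \mu F^{-1} 1_n$, i.e.\ $u$ parallel to $F^{-1} 1_n$. Equating $-I^{\text{inj}} = \mu F^{-1} 1_n$ would force $I^{\text{inj}}$ itself to be proportional to $F^{-1} 1_n$, which fails for a generic disturbance. A clean counterexample is any $I^{\text{inj}} \neq 0$ with $\sum_{i} I_i^{\text{inj}} = 0$: then $\mu = 0$, so $u^* = 0$, whereas the PI controller yields $u = -I^{\text{inj}} \neq 0$. The computation is routine, so the main obstacle is conceptual rather than technical, namely recognizing that the per-node integral action drives \emph{every} local voltage to nominal, which rigidly over-determines $u$ as $-I^{\text{inj}}$ and leaves no degrees of freedom to realize the proportional-sharing vector except in the non-generic case where $I^{\text{inj}}$ happens to align with $F^{-1} 1_n$.
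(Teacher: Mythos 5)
Your proposal is correct and follows essentially the same route as the paper: compute the equilibrium of the closed-loop PI dynamics, observe that $\dot{z}=0$ forces $V=V^{\text{nom}}$ and hence $u=-I^{\text{inj}}$, and conclude that this generically fails to have the form $\mu F^{-1}1_n$ required by Lemma~\ref{lemma:optimality}. Your explicit counterexample (any nonzero $I^{\text{inj}}$ with $\sum_i I_i^{\text{inj}}=0$, giving $u^*=0$ but $u=-I^{\text{inj}}\neq 0$) is a nice concrete touch that the paper leaves implicit in the phrase ``in general.''
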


\begin{proof}
The closed-loop dynamics of the MTDC system \eqref{eq:hvdc_vector} and the decentralized PI controller \eqref{eq:dec_voltage_control_PI} is given by
\begin{subequations}
\label{eq:hvdc_dec_voltage_control_PI_cl}
\begin{align}
\dot{V} &= -E \mathcal{L}_R V' - E K^P V' - E K^I z + E I^\text{inj} \label{subeq:hvdc_dec_voltage_control_PI_cl_a} \\
\dot{z} &= V', \label{subeq:hvdc_dec_voltage_control_PI_cl_b}
\end{align}
\end{subequations}
where $V'=V-V^\text{nom}$. Now consider the equilibrium of \eqref{eq:hvdc_dec_voltage_control_PI_cl}, which implies $V'=0_n$ and $z=(K^I)^{-1}I^\text{inj}$. This implies that in steady-state, $u=-K^Iz = -I^\text{inj}$. This implies that any change in load, $I^\text{inj}$, is compensated for locally at the terminal. Clearly Objective~\ref{obj:current_sharing_opt} is not satisfied, since $u\ne \mu F^{-1}1_{n}$ in general. 
\end{proof}

%%%%%%%%%%%%%%%%%%%%%%%%%%%%%%%%%%%%%%%%%%%%
%%%%%%%%%%%%%%%%%%%%%%%%%%%%%%%%%%%%%%%%%%%%
\section{Decentralized PI control with deadband}
\label{sec:deadband_PI}
As we have seen in Section~\ref{sec:dec_PI}, pure decentralized PI controllers at the terminals cannot guarantee current sharing by Objective~\ref{obj:current_sharing_opt}. In fact, the injected currents depend entirely on the disturbance $I^\text{inj}$, and $u=-I^\text{inj}$ at steady-state. This even defeats the purpose of an MTDC system, as any disturbance is compensated for locally, and there are no HVDC currents in steady-state. 
A suitable controller for an MTDC system should possess the flexibility and current sharing property of a droop controller, while guaranteeing that the voltages converge to within the acceptable operating range. 
In this section we propose a decentralized dynamical controller which is guaranteed to satisfy Objective~\ref{obj:voltage_range} and, in some cases, also Objective~\ref{obj:current_sharing_opt}. 
The following decentralized controller is proposed:
\begin{subequations}
\label{eq:dec_voltage_control}
\begin{align}
u_i &= -K^P_i (V_i - V_i^\text{nom}) - K_i^I z_i \label{subeq:dec_voltage_control_1} \\
\dot{z}_i &= [V_i - \overline{V}_i]^+ - [\underline{V}_i - V_i]^+ \label{subeq:dec_voltage_control_2} 
\end{align}
\end{subequations}
where $K^P_i, K^I_i \; i=1, \dots, n$ are positive controller gains, $[x]^+=\max(x,0)$, and $V_i^\text{nom}$ is the reference voltage of terminal $i$. 
We assume that the nominal voltage for every terminal is within the operational range.
\begin{assumption}
\label{ass:nom_voltage}
The nominal voltages satisfy $\underline{V}_i \le V_i^\text{nom} \le \overline{V}_i, \; i\in \mathcal{V}$.  
\end{assumption}
\begin{remark}
Note that if $K^I_i=0, \; i=1, \dots, n$, the controller \eqref{eq:dec_voltage_control} reduces to the droop controller \eqref{eq:droop}. If however $\underline{V}_i = V_i^\text{nom} = \overline{V}_i$, the controller \eqref{eq:dec_voltage_control} reduces to  the decentralized PI controller \eqref{eq:dec_voltage_control_PI}. 
\end{remark}
We now show that the controller \eqref{eq:dec_voltage_control} asymptotically steers the voltages of \eqref{eq:hvdc_scalar} into the acceptable operating range, regardless of the uncontrolled injected currents $I^\text{inj}_i$. Inserting \eqref{eq:dec_voltage_control} in \eqref{eq:hvdc_scalar} and introducing the state vectors $V=[V_1, \dots, V_n]^T$ and $z=[z_1, \dots, z_n]^T$ yields
\begin{subequations}
\label{eq:hvdc_dec_voltage_control_cl}
\begin{align}
\dot{V} &= -E \mathcal{L}_R V - E K^P (V-V^\text{nom}) - E K^I z + E I^\text{inj} \label{subeq:hvdc_dec_voltage_control_cl_a} \\
\dot{z} &= [V - \overline{V}]^+ - [\underline{V} - V]^+, \label{subeq:hvdc_dec_voltage_control_cl_b}
\end{align}
\end{subequations}
where  $E=\diag(C_1^{-1}, \dots, C_n^{-1})$ is the matrix of elastances,  $\overline{V} = [\overline{V}_1, \dots, \overline{V}_n]$, $\underline{V} = [\underline{V}_1, \dots, \underline{V}_n]$ and $K^I=\diag(K^I_1, \dots, K^I_n)$. 
\begin{theorem}
\label{th:stability_ss_PI_deadband}
The dynamics \eqref{eq:hvdc_dec_voltage_control_cl} satisfy Objective~\ref{obj:voltage_range} if the voltage bounds are uniform, i.e., $\overline{V}_i=\overline{V}_j, \underline{V}_i=\underline{V}_j, \;\forall i,j \in \mathcal{V}$.
\end{theorem}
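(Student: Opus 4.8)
The plan is to first notice that the theorem reduces to a pure convergence statement. At any equilibrium of \eqref{eq:hvdc_dec_voltage_control_cl} we need $\dot z = 0$, i.e. $[V_i-\overline{V}_i]^+ = [\underline{V}_i - V_i]^+$ for every $i$; since these two terms can never be simultaneously positive, this forces $V_i \in [\underline{V}_i,\overline{V}_i]$. Hence \emph{every} equilibrium already lies in the operating box, and it suffices to show that every trajectory converges to the equilibrium set. Because the deadband freezes the integrators on the box, this set is a continuum, so I would establish convergence with a Lyapunov function together with LaSalle's invariance principle rather than by isolating a single equilibrium.

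Under the uniform-bound hypothesis, write $\underline{v}=\underline{V}_i$, $\overline{v}=\overline{V}_i$ for the common bounds, so that $g(V) := [V-\overline{V}]^+ - [\underline{V}-V]^+$ applies one common scalar deadband to each coordinate and $g(V)=V-P(V)$, where $P$ is the projection onto the box $[\underline{v},\overline{v}]^n$. A key first step is that a \emph{spatially uniform} equilibrium exists: taking $V^\star = v^\star 1_n$ with $v^\star\in[\underline v,\overline v]$ (nonempty by Assumption~\ref{ass:nom_voltage}) and $z^\star = (K^I)^{-1}(I^{\text{inj}} - K^P(V^\star - V^{\text{nom}}))$ solves the equilibrium equations precisely because $\mathcal{L}_R 1_n = 0$. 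Centering at this $V^\star$ is what makes the argument close. I would then take the Lur'e--Postnikov function
\begin{align*}
W = \tfrac12 (V-V^\star)^T E^{-1}(V-V^\star) + \tfrac12 (z-z^\star)^T K^I (z-z^\star) + \Phi(V),
\end{align*}
with $\nabla\Phi(V) = E^{-1}(V^\star - P(V))$, i.e. $\Phi$ the coordinatewise integral of the deadband. One checks $W$ is $C^1$, convex (the $V$-Hessian of $W$ is $E^{-1}\diag(1-P_i')\succeq 0$ since each saturation slope $P_i'\in\{0,1\}$) and radially unbounded, with global minimum at $(V^\star,z^\star)$, hence a legitimate proper Lyapunov candidate.

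The purpose of $\Phi$ is that its derivative cancels all $z$-dependent cross terms; combining it with the incremental form of \eqref{subeq:hvdc_dec_voltage_control_cl_a} and using $E^{-1}E=I$ and $g(V)=V-P(V)$, a direct computation collapses to
\begin{align*}
\dot W = -g(V)^T(\mathcal{L}_R + K^P)(V-V^\star).
\end{align*}
Because $\mathcal{L}_R V^\star = 0$, the Laplacian contribution is just $-g(V)^T\mathcal{L}_R V$, and the second piece is $-\sum_i K^P_i\, g(V_i)(V_i - v^\star)$.

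The main obstacle, and the single place where uniformity is indispensable, is signing the Laplacian term. Writing it edgewise, $g(V)^T\mathcal{L}_R V = \sum_{(i,j)\in\mathcal{E}} R_{ij}^{-1}\big(g(V_i)-g(V_j)\big)(V_i-V_j)$; since uniformity makes $g$ the \emph{same} nondecreasing scalar function on every terminal, each summand is a product of co-monotone differences and is nonnegative, whereas with non-uniform bounds the per-terminal deadbands differ and this sign is lost. The remaining term is nonnegative coordinatewise from the sign pattern of the deadband ($g(V_i)$ and $V_i-v^\star$ always share sign), so $\dot W \le 0$. I would then invoke LaSalle: on $\{\dot W = 0\}$ both forms vanish, forcing $g(V_i)(V_i-v^\star)=0$ and hence $g(V)=0$, so $V$ lies in the box; on the largest invariant subset $\dot z = g(V)=0$ freezes $z$, and since $\mathcal{L}_R+K^P\succ 0$ the voltage is constant there, so the set consists of equilibria. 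Convergence of every trajectory to this set gives $\lim_{t\to\infty}V_i(t)\in[\underline{V}_i,\overline{V}_i]$, establishing Objective~\ref{obj:voltage_range}.
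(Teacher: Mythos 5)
Your proof is correct, and it reaches the conclusion by a genuinely different route than the paper. The paper shifts coordinates to $V'=V-V^{\text{nom}}$, picks $z^0$ so that $V'=0_n$ is stationary, and uses the Lyapunov function $W=\frac12 g(V')^T C\, g(V') + \frac12 z'^T K^I z'$, i.e.\ the ``energy'' of the constraint violation itself plus the integrator energy; you instead construct a spatially uniform equilibrium $V^\star=v^\star 1_n$ and use a Lur\'e--Postnikov storage function (full voltage quadratic, plus the integral of the deadband, plus the integrator quadratic). Despite the different candidates, both derivations collapse $\dot W$ to the same two terms --- an edgewise Laplacian sum and a diagonal $K^P$ sum --- and both sign them by exactly the same two observations: co-monotonicity of the common scalar deadband across terminals (the one place uniformity of the bounds enters, which you correctly isolate) and the sign agreement between $g(V_i)$ and the voltage deviation. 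What your version buys is a cleaner invariance argument: the paper's $W$ vanishes on the whole target set and the paper concludes convergence somewhat informally, whereas you explicitly invoke LaSalle, show the largest invariant subset of $\{\dot W=0\}$ consists of equilibria, and thereby actually justify that the limit in Objective~\ref{obj:voltage_range} exists rather than only that the trajectory approaches the box. Two cosmetic imprecisions worth fixing but not gaps: the global minimum of your $W$ is attained on the set $\{\underline v\le V_i\le\overline v\}\times\{z^\star\}$, not at the single point $(V^\star,z^\star)$; and the step ``the voltage is constant on the invariant set'' should be argued via backward invariance (or uniqueness of the equilibrium of the frozen-$z$ affine dynamics), since forward convergence of that subsystem alone does not force constancy.
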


\begin{proof}
Let ${V}'=V-V^\text{nom}$ and $z^0$ be defined as a stationary solution to \eqref{subeq:hvdc_dec_voltage_control_cl_a} where ${V}'=0_n$, i.e.,
\begin{align}
\mathcal{L}_R V^\text{nom}  -K^I z^0 + I^\text{inj}=0. 
\end{align}
Defining $z'=z-z^0$, we rewrite \eqref{eq:hvdc_dec_voltage_control_cl} as 
\begin{subequations}
\label{eq:hvdc_dec_voltage_control_cl_hat}
\begin{align}
\dot{{V}}' &= -E \mathcal{L}_R {V}' - E K^P{V}' - E K^I z' \label{subeq:hvdc_dec_voltage_control_cl_hat_a} \\
\dot{z}' &= [{V}' - {\overline{V}}']^+ - [{\underline{V}}'  - {V}']^+, \label{subeq:hvdc_dec_voltage_control_cl_hat_b}
\end{align}
\end{subequations}
where ${\overline{V}}' = \overline{V} - V^\text{nom}$, ${\underline{V}}' = \underline{V} - V^\text{nom}$.
Consider now the following Lyapunov function candidate:
\begin{IEEEeqnarray*}{lcl}
W({V}',z') & {}={} & \frac 12 \left( [{V}' - {\overline{V}}']^+ - [{\underline{V}}' - {V}']^+ \right)^T C \\
&& \times \left( [{V}' - {\overline{V}}']^+ - [{\underline{V}}' - {V}']^+ \right) + \frac{1}{2} z'^T K^I z',
\end{IEEEeqnarray*}
where $C=\diag(C_1, \dots, C_n)$. We note that ${\overline{V}}' \ge 0_n$ and ${\underline{V}}'  \le 0_n$, where the inequalities are taken component-wise.
 Clearly $W({V}',z')\ge 0$, and the set where $W({V}',z') = 0$ is given by $S_1 = \{ [{V'}^T,z'^T]^T| W({V}',z') = 0 \} = \{ [{V'}^T,z'^T]^T | {\underline{V}}'  \le {V}' \le {\overline{V}}', z'=0 \} $. 
 Differentiating $W({V}',z')$ along trajectories of \eqref{eq:hvdc_dec_voltage_control_cl_hat} we obtain
{\allowdisplaybreaks
\begin{IEEEeqnarray*}{lcl}
\dot{W}({V}',z') & {}={} & \left( [{V}' - {\overline{V}}']^+ - [{\underline{V}} - \hat{V}']^+ \right)^T C \\
&& \times \frac{\text{d}}{\text{d}t} \left( [{V}' - {\overline{V}}']^+ - [{\underline{V}}' - {V}']^+ \right) +  z'^T K^I \dot{z}' \\
& {}={} & -\left( [{V}' - {\overline{V}}']^+ - [{\underline{V}}' - {V}']^+ \right)^T C  \\
&& \times \sgn \left( [{V}' - {\overline{V}}']^+ + [{\underline{V}}' - {V}']^+ \right)  \\
&& \times \left(-E \mathcal{L}_R  {V}'-EK^P V' -E K^I z' \right)\\
& {}={} & -\left( [{V}' - {\overline{V}}']^+ - [{\underline{V}}' - {V}']^+ \right)^T \mathcal{L}_R  {V}' \\
&& - \left( [{V}' - {\overline{V}}']^+ - [{\underline{V}}' - {V}']^+ \right)^T K^P  {V}' \\
&& - \left( [{V}' - {\overline{V}}']^+ - [{\underline{V}}' - {V}']^+ \right)^T K^I z' \\
&& +  z'^T K^I \left( [{V}' - {\overline{V}}']^+ - [{\underline{V}}' - {V}']^+ \right) \\
& {}={} & - \sum_{(i,j) \in \mathcal{E}} \Big[ \left( [{V}'_i - {\overline{V}}'_i]^+ - [{\underline{V}}'_i - {V}'_i]^+ \right) \\
&& - \left( [{V}'_j - {\overline{V}}'_j]^+ - [{\underline{V}}'_j - {V}'_j]^+ \right) \Big] \frac{1}{R_{ij}} \left( {V}'_i - {V}'_j  \right) \\
&& - \sum_{i\in \mathcal{V}} \left( [{V}'_i - {\overline{V}}'_i]^+ - [{\underline{V}}'_i - {V}'_i]^+ \right) K^P_i {V}'_i \\
& {}\le {} & - \sum_{i\in \mathcal{V}} \left( [{V}'_i - {\overline{V}}'_i]^+ - [{\underline{V}}'_i - {V}'_i]^+ \right) K^P_i {V}'_i \le 0.
\end{IEEEeqnarray*}
}
Here function $sgn(x_1, \ldots, x_n)$ is defined as $sgn(x)=\diag(sgn(x_1), \ldots, sgn(x_n))$. The second last inequality follows from the assumptions that $\overline{V}_i = \overline{V}^*$ and $\underline{V}_i = \underline{V}^*$ for $i \in \mathcal{V}$. For the case when ${V}'_i \ge {\overline{V}}'_i$ and ${V}_j \ge {\overline{V}}'_j$, $[{V}'_i - {\overline{V}}'_i]^+ \ge [{V}_j - {\overline{V}}'_j]^+$ implies that  ${V}'_i\ge {V}'_j$. The other cases follow by similar arguments. Thus, the dynamics \eqref{eq:hvdc_dec_voltage_control_cl_hat} converge to a set where ${\underline{V}}' \le {V}' \le {\overline{V}}'$, or equivalently $\underline{V} \le V \le \overline{V}$. As the last inequality coincides with Objective~\ref{obj:voltage_range}, this concludes the proof. 
\end{proof}
The fact that the equilibrium of the closed-loop dynamics \eqref{eq:hvdc_dec_voltage_control_cl} is asymptotically stable regardless of the controller gains facilitates the tuning of the controller \eqref{eq:dec_voltage_control}. Since any controller parameter results in an asymptotically stable equilibrium, the controller can be tuned without stability in mind. 
We now show that under some conditions, the MTDC system \eqref{eq:hvdc_scalar} controlled with the decentralized deadband PI controller \eqref{eq:dec_voltage_control} satisfies Objective~\ref{obj:current_sharing_opt}. %{\color{red} Theorem 7 needs to be fixed}

\begin{lemma}
\label{th:current_sharing_PI_deadband}
If $\norm{I^\text{inj}}$ is sufficiently small and the HVDC line resistances go to zero, the MTDC system \eqref{eq:hvdc_scalar} controlled with the decentralized deadband PI controller \eqref{eq:dec_voltage_control} satisfies Objective~\ref{obj:current_sharing_opt} if the initial voltage $V(0)$ is very close to $V^{nom}$.
\end{lemma}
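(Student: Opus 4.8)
The plan is to show that in the stated regime the deadband integrator never activates, so that the closed loop reduces \emph{exactly} to the droop controller~\eqref{eq:droop}, after which Objective~\ref{obj:current_sharing_opt} follows directly from Lemma~\ref{lemma:droop_current_sharing_resistance}. I take $z(0)=0_n$, so that the initial controlled injection is purely proportional, and assume $V^\text{nom}$ lies strictly inside the deadband (so that the box $\{\,\underline{V}\le V\le\overline{V}\,\}$ has nonempty interior). First I would introduce the maximal time interval $[0,T)$ on which $V(t)$ remains inside this box. On $[0,T)$ we have $\dot z=0_n$ by~\eqref{subeq:dec_voltage_control_2}, hence $z(t)\equiv 0_n$, and with $V'=V-V^\text{nom}$ and $\mathcal{L}_R V^\text{nom}=0$ the voltage obeys the pure droop dynamics $\dot V'=-E(\mathcal{L}_R+K^P)V'+EI^\text{inj}$.

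The heart of the argument is to prove $T=\infty$, i.e.\ that the voltages never reach the deadband boundary, and that this holds uniformly as the line resistances vanish. To this end I would analyse the droop dynamics about their equilibrium $V'_{\mathrm{eq}}=(\mathcal{L}_R+K^P)^{-1}I^\text{inj}$ via the quadratic Lyapunov function $U=\tfrac12(V'-V'_{\mathrm{eq}})^T C(V'-V'_{\mathrm{eq}})$ with $C=E^{-1}=\diag(C_1,\dots,C_n)$. Since $CE=I$ and $\mathcal{L}_R+K^P$ is positive definite, a direct computation gives $\dot U=-(V'-V'_{\mathrm{eq}})^T(\mathcal{L}_R+K^P)(V'-V'_{\mathrm{eq}})\le 0$, so the trajectory stays inside the $C$-weighted ellipsoid of radius $\|V'(0)-V'_{\mathrm{eq}}\|_C$ centred at $V'_{\mathrm{eq}}$.

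The crucial uniformity in the resistance scaling $\gamma$ comes from the estimate $\lambda_{\min}(\mathcal{L}_R+K^P)\ge\lambda_{\min}(K^P)$ (Weyl, using $\mathcal{L}_R\succeq 0$), which gives $\|V'_{\mathrm{eq}}\|\le\|I^\text{inj}\|/\lambda_{\min}(K^P)$ \emph{independently of} $\gamma$. Hence, if $\|I^\text{inj}\|$ is small enough and $V(0)$ is close enough to $V^\text{nom}$, both $V'_{\mathrm{eq}}$ and the offset $V'(0)-V'_{\mathrm{eq}}$ are small, and since $C$ is fixed the ellipsoid shrinks into the fixed-size box. A standard invariance/continuity argument then shows that $V'$ cannot reach the deadband boundary, so $T=\infty$; therefore $z\equiv 0_n$ and $u=-K^P V'$ for all time, for every $\gamma$.

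Finally, with the closed loop identical to the droop controller, I apply Lemma~\ref{lemma:droop_current_sharing_resistance}: in the limit $\gamma\to 0$ the droop controller yields $u=-\big(\sum_i I_i^\text{inj}\big)/\big(\sum_i K_i^P\big)\,K^P 1_n$, which by Lemma~\ref{lemma:optimality} is exactly the optimal current sharing $u^*=\mu F^{-1}1_n$ with $F^{-1}=K^P$, so Objective~\ref{obj:current_sharing_opt} holds. The main obstacle I anticipate is establishing the deadband invariance uniformly in $\gamma$: because the eigenvalues of $\mathcal{L}_R$ blow up as $\gamma\to 0$, one must check that the trajectory bound does not degenerate, which is precisely why the $\gamma$-independent estimate $\lambda_{\min}(\mathcal{L}_R+K^P)\ge\lambda_{\min}(K^P)$ is the key ingredient.
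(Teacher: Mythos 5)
Your proposal follows the same skeleton as the paper's proof---show the deadband is never activated so that $z\equiv 0_n$ and the closed loop degenerates to the droop dynamics \eqref{eq:hvdc_closed_loop_droop_vector}, then invoke Lemma~\ref{lemma:droop_current_sharing_resistance}---but you supply substance where the paper only gestures. The paper justifies forward invariance of the deadband by ``continuity of solutions with respect to initial conditions'' and ``stability,'' which is thin: continuity only gives closeness on compact time intervals, and the Lyapunov function of Theorem~\ref{th:stability_ss_PI_deadband} vanishes on the entire deadband set, so it cannot by itself confine a trajectory near $V^\text{nom}$. Your bootstrap on the maximal interval $[0,T)$ together with the quadratic Lyapunov function $U=\tfrac12(V'-V'_{\mathrm{eq}})^TC(V'-V'_{\mathrm{eq}})$ closes that gap cleanly. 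More importantly, you identify and resolve an issue the paper does not even mention: the threshold on $\norm{I^\text{inj}}$ must not degenerate as the resistance scaling $\gamma\to 0$, otherwise the limit statement is vacuous, and your $\gamma$-independent bound $\lambda_{\min}(\mathcal{L}_R+K^P)\ge\lambda_{\min}(K^P)$ is exactly what makes ``sufficiently small'' uniform in $\gamma$. Two minor caveats: you need $V^\text{nom}$ strictly inside $[\underline{V},\overline{V}]$ (Assumption~\ref{ass:nom_voltage} only guarantees non-strict inequalities, and both your argument and the paper's fail if $V^\text{nom}_i$ sits on the boundary), and you implicitly fix $z(0)=0_n$; both are reasonable readings of the lemma's hypotheses and worth stating explicitly. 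Overall your proof is correct and is, if anything, a more complete argument than the one in the paper.
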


\begin{proof}
Consider the closed-loop dynamics \eqref{eq:hvdc_dec_voltage_control_cl}. By the assumption that $V^{\text{nom}}_i = V^{\text{nom}}_j, \; i,j \in \mathcal{V}$, $V=V^\text{nom}$ is an equilibrium of \eqref{eq:hvdc_dec_voltage_control_cl} if $I^\text{inj}=0_n$. By continuity of solutions with respect to initial conditions, for sufficiently small $\norm{I^\text{inj}}$, the solutions of \eqref{eq:hvdc_dec_voltage_control_cl} will satisfy $\underline{V}\le V(t) \le \overline{V}\; \forall t\ge 0$ due to stability of \eqref{eq:hvdc_dec_voltage_control_cl} and by Assumption~\ref{ass:nom_voltage}. Thus, $z(t)=0\; \forall t\ge 0$, and the dynamics \eqref{eq:hvdc_dec_voltage_control_cl} reduce to those corresponding to a decentralized droop controller, \eqref{eq:hvdc_closed_loop_droop_vector}. By Lemma~\ref{lemma:droop_current_sharing_resistance}, Objective~\ref{obj:current_sharing_opt} is satisfied in the limit when $R_{ij}\rightarrow 0, (i,j) \in \mathcal{E}$. This concludes the proof.
\end{proof}
We have now shown that for sufficiently small disturbances $I^\text{inj}$ and resistances $R_{ij}$, an MTDC system \eqref{eq:hvdc_scalar} controlled with the decentralized deadband PI controller \eqref{eq:dec_voltage_control} satisfies Objective~\ref{obj:current_sharing_opt}. When the disturbance becomes larger, Objective~\ref{obj:current_sharing_opt} is no longer satisfied in general. However, Objective~\ref{obj:voltage_range} is satisfied regardless of the disturbance $I^\text{inj}$. Thus, the HVDC voltages always converge to a value within the operating range. Future work will focus on characterizing Objective~\ref{obj:current_sharing_opt} as a soft constraint by the cost function \eqref{eq:opt1}. By regarding Objective~\ref{obj:voltage_range} as a hard constraint, this constraint can be added to the optimization problem \eqref{eq:opt1} of Objective~\ref{obj:current_sharing_opt}. Such a formulation is however likely to require a distributed controller structure, allowing for communication between HVDC terminals. An important question is how Objectives~\ref{obj:voltage_range} and \ref{obj:current_sharing_opt} can be combined for controllers with decentralized structure.

%%%%%%%%%%%%%%%%%%%%%%%%%%%%%%%%%%%%%%%%
%%%%%%%%%%%%%%%%%%%%%%%%%%%%%%%%%%%%%%%%
\section{Simulations}
\label{sec:simulations}
In this section we demonstrate the proposed decentralized PI controller with deadband \eqref{eq:dec_voltage_control} on the four-terminal MTDC system illustrated in Figure~\ref{fig:graph}. For reference, also the decentralized voltage droop controller \eqref{eq:droop} was simulated. The MTDC system was simulated using \textsc{Matlab}. The HVDC line resistances are assumed to be $3.7$ $\Omega$ for all lines. The lower and upper voltage limits are by $95$ kV and $105$ kV for all terminals, respectively.
\begin{figure}[t]
	\centering
%\tikzset{external/remake next}
\tikzsetnextfilename{HVDC_decentralized_dynamical_V}
	\begin{tikzpicture}
	\begin{axis}
	[cycle list name=Voltage,
	xlabel={$t$ [ms]},
	ylabel={$V(t)$ [kV]},
	xmin=-0.05,
	xmax=1.5,
	ymin=91,
	ymax=101,
	yticklabel style={/pgf/number format/.cd,
		fixed,
		%fixed zerofill,
		precision=4},
	grid=major,
	height=4.5cm,
	width=4.5cm,
	legend cell align=left,
	legend pos= north east,
	legend entries={Deadband PI controller, Droop controller, $\underline{V}$
	},
	]
	\addlegendimage{blue,line width=0.5pt}
	\addlegendimage{red,dashed,line width=0.5pt}
	\addlegendimage{dotted,line width=0.5pt}
	\foreach \x in {1, 2, 3, 4, 5, 6, 7, 8, 9}{
	\addplot table[x index=0,y index=\x,col sep=space]{Simulations/HVDC_decentralized_dynamical_V.txt};
	}	
	\end{axis}
	\end{tikzpicture}
\caption{Voltages of the HVDC terminals for the decentralized PI controller with deadband \eqref{eq:dec_voltage_control}, and for the decentralized voltage droop controller \eqref{eq:droop}, respectively. }
	\label{fig:HVDC_decentralized_V}
\end{figure}
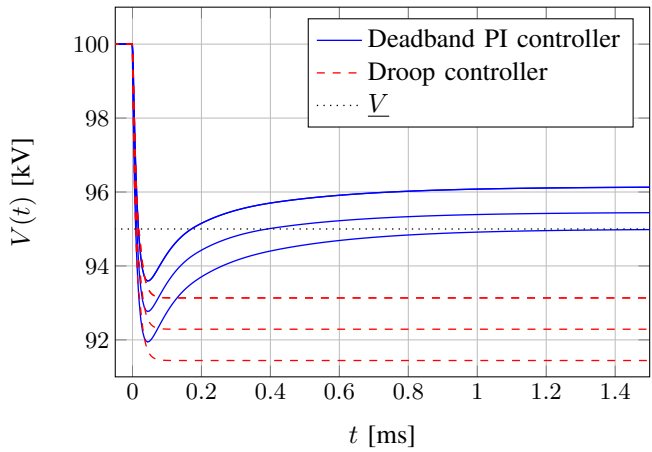
\begin{figure}[t]
	\centering
%\tikzset{external/remake next}
\tikzsetnextfilename{HVDC_decentralized_dynamical_I_inj}
	\begin{tikzpicture}
	\begin{axis}
	[cycle list name=InjectedCurrent,
	xlabel={$t$ [ms]},
	ylabel={$u(t)$ [A]},
	xmin=-0.05, 
	xmax=1.5,
	ymin=-100,
	ymax=1400,
	yticklabel style={/pgf/number format/.cd,
		fixed,
		%fixed zerofill,
		precision=4},
	grid=major,
	height=6.5cm,
	width=\columnwidth,
	legend cell align=left,
	legend pos= south east,
	legend entries={Deadband PI controller, Droop controller
	},
	]
	\addlegendimage{blue,line width=0.5pt}
	\addlegendimage{red,dashed,line width=0.5pt}
	\foreach \x in {1, 2, 3, 4, 5, 6, 7, 8}{
	\addplot table[x index=0,y index=\x,col sep=space]{Simulations/HVDC_decentralized_dynamical_I_inj.txt};
	}	
	\end{axis}
	\end{tikzpicture}
\caption{Controlled injected currents for the decentralized PI controller with deadband \eqref{eq:dec_voltage_control}, and for the decentralized voltage droop controller \eqref{eq:droop}, respectively. }
	\label{fig:HVDC_decentralized_u}
\end{figure}
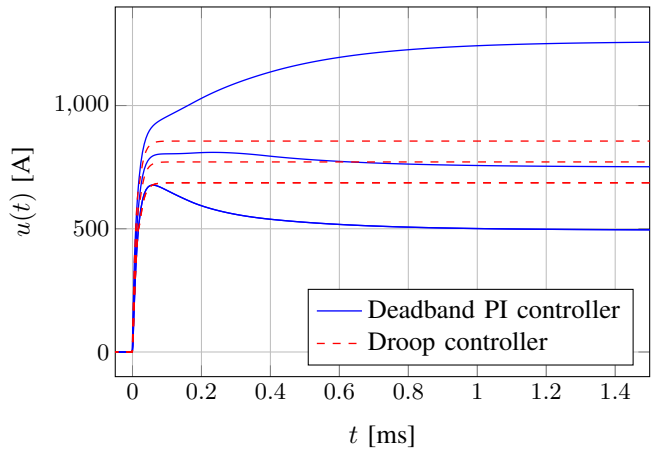
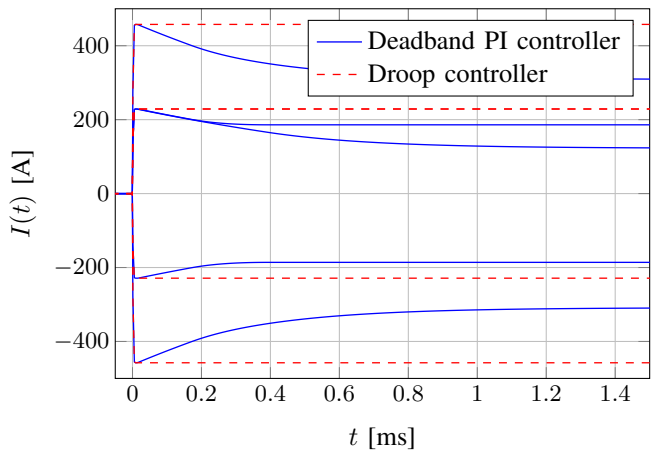
\begin{figure}[t!]
	\centering
%\tikzset{external/remake next}
\tikzsetnextfilename{HVDC_decentralized_dynamical_I_line}
	\begin{tikzpicture}
	\begin{axis}
	[cycle list name=LineCurrent,
	xlabel={$t$ [ms]},
	ylabel={$I(t)$ [A]},
	xmin=-0.05,
	xmax=1.5,
	ymin=-500,
	ymax=500,
	yticklabel style={/pgf/number format/.cd,
		fixed,
		%fixed zerofill,
		precision=4},
	grid=major,
	height=6.5cm,
	width=\columnwidth,
	legend cell align=left,
	legend pos= north east,
	legend entries={Deadband PI controller, Droop controller
	},
	]
	\addlegendimage{blue,line width=0.5pt}
	\addlegendimage{red,dashed,line width=0.5pt}
	\foreach \x in {1, 2, 3, 4, 5, 6, 7, 8, 9, 10}{
	\addplot table[x index=0,y index=\x,col sep=space]{Simulations/HVDC_decentralized_dynamical_I_line.txt};
	}	
	\end{axis}
	\end{tikzpicture}
\caption{HVDC line currents for the decentralized PI controller with deadband \eqref{eq:dec_voltage_control}, and for the decentralized voltage droop controller \eqref{eq:droop}, respectively. }
	\label{fig:HVDC_decentralized_I_line}
\end{figure}
The controller parameters are given by $k^V=10$ and $k^I=10$. The simulation was initialized at the voltages $V=[105, 104.96, 104.9, 105.07]^T$ kV. This implies that the voltage of the fourth HVDC terminal is initially larger than the maximum voltage. The simulation results are illustrated in Figures~\ref{fig:HVDC_decentralized_V}--\ref{fig:HVDC_decentralized_I_line} which show the DC voltages, the controlled injected currents and the HVDC line currents for both the decentralized droop controller \eqref{eq:droop} and the deadband PI controller \eqref{eq:dec_voltage_control}, respectively. Figure~\ref{fig:HVDC_decentralized_V} shows that while the droop controller \eqref{eq:droop} fails to satisfy Objective~\ref{obj:voltage_range}, the voltages under the deadband PI controller \eqref{eq:dec_voltage_control} all converge to within the acceptable operating range. Figure~\ref{fig:HVDC_decentralized_u} shows that the power sharing property of the droop controller \eqref{eq:droop} is lost under the deadband PI controller \eqref{eq:dec_voltage_control}, where one HVDC terminal needs to inject a considerable increased amount of current in order to drive the DC voltages within the acceptable operation range. 

%%%%%%%%%%%%%%%%%%%%%%%%%%%%%%%%%%%%%%%%
%%%%%%%%%%%%%%%%%%%%%%%%%%%%%%%%%%%%%%%%
\section{Discussion and conclusions}
\label{sec:discussion}
In this paper we have studied decentralized controllers for voltage control of MTDC grids. Starting with the droop controller we have shown that while it does in general not guarantee the voltages to converge within the acceptable operation range, it is able to share the current injections fairly provided that the HVDC line resistances are sufficiently low. We subsequently showed that while a decentralized PI controller guarantees the voltages to converge within the acceptable operation range, the injected currents are in general not distributed fairly. We proposed a decentralized PI controller with deadband to overcome the aforementioned challenges. The proposed controller guarantees the voltages to converge within the acceptable operation range, while providing fair sharing of the injected currents under certain conditions.
A major advantage of the proposed controller is its fully decentralized structure, making it much less vulnerable to communication delays than distributed (with local communication) or centralized controllers. 
 The effectiveness of the proposed controller was validated through simulation on a four-terminal MTDC grid. Future work will focus on unifying the two objectives studied in this paper. By assigning a cost to the disability to fairly share the injected currents the current sharing objective may be relaxed. 

\bibliographystyle{IEEEtran}
\bibliography{references}

\end{document}